\documentclass[11pt]{article}
\usepackage{xcolor}
\usepackage{geometry}
\usepackage{amsmath, amsthm, amssymb, mathrsfs}
\usepackage{hyperref}
\usepackage[capitalize]{cleveref}
\usepackage{graphicx}

\newtheorem{theorem}{Theorem}

\newtheorem{fact}[theorem]{Fact}

\newtheorem{lemma}[theorem]{Lemma}

\theoremstyle{remark}

\newtheorem{remark}[theorem]{Remark}

\newcommand{\R}{\mathbb{R}}

\newcommand{\eps}{\varepsilon}
\renewcommand{\phi}{\varphi}
\newcommand{\iprod}[2]{\langle #1,#2 \rangle}

\title{An optimal constant for vector balancing with permutations}
\author{Jonathan Niles-Weed, Shay Sadovsky, Jacob Shkrob}
\begin{document}
\maketitle
\begin{abstract}
We present a version of the vector balancing problem in which each vector may be given a sign and a permutation of its coordinates. We prove that this vector balancing problem and its corresponding prefix problem admit an explicit bound, and we further show that it is asymptotically optimal in the dimension. Our method of proof is purely geometric.
\end{abstract}
\section{Introduction}
Dvoretzky's vector balancing problem, posed in 1963  \cite{dvoretzky1963Problem}, is a fundamental question in convex geometry. Given two symmetric convex sets $K$ and $L$ in $\R^n$, the problem asks for a bound on the quantity
\[
\operatorname{vb}(L,K)
:= \sup\left\{
\min_{\varepsilon \in \{-1,1\}^m}
\left\|\sum_{i=1}^m \varepsilon_i u_i\right\|_K
:
m \in \mathbb{N},\ u_1,\ldots,u_m \in L
\right\}.
\]
This problem has been extensively studied in the intervening decades.
Pioneering results due to Gluskin \cite{gluskin1989extremal} and Spencer's \cite{spencer1985six} show that $\text{vb}(B_\infty^n, B_\infty^n)= O(\sqrt{n})$.
The most famous version is the celebrated conjecture of Koml\'os that $\operatorname{vb}(B_2^n,B_\infty^n)= O(1)$. This problem was first studied in the seminal paper of Spencer \cite{spencer1985six}, which gives a bound of $O(\log n)$. Banaszczyk \cite{banaszczyk1998balancing} improved the bound to $O(\sqrt{\log n})$, which stood as the best bound for nearly 30 years before the breakthrough of Bansal and Jiang~\cite{bansal2026decoupling}, which gave a bound of $O((\log n)^{1/4})$ with additional $\log \log n$ factors. Very recently, a preprint claiming a proof of the Koml\'os conjecture (with constant  $3\sqrt{2\pi}$) was posted online~\cite{guo2026vector}. This proof was subsequently simplified and interpreted by Karingula and Lovett~\cite{karingula2026elementaryproofkomlosconjecture}.

The richness of the Koml\'os problem has motivated the study of variants of the vector balancing problem in which the signs $\varepsilon_i$ are replaced by other quantities, for instance by unit-norm vectors~\cite{nikolov_komlos_2013}, jointly Gaussian random variables~\cite{chewi2022gaussian}, or complex numbers~\cite{guillen2026koml}.

The aim of this note is to study a new variant, in which, in addition to a sign choice, the vectors' coordinates may also be permuted, corresponding to a choice of $\varepsilon \in~(\{-1,1\}~\times~S_n)^m$. In this case, we fully solve the discrepancy problem, obtain an explicit dimension-dependent upper bound, and determine the tight asymptotic leading constant.
In fact, we show that it is possible to control all partial sums simultaneously, thereby obtaining a bound on what is known as the prefix discrepancy~\cite{barany1981some,banaszczyk_series_2012,bansal_prefix_2022}.
\begin{theorem}\label{thm:komlos-permutations}
	Let $v_1,\dots, v_m \in B_2^n$, then there exist signs $(\varepsilon_k)_{k=1}^m \in \{-1,1\}^m$ and permutations $(\sigma_k)_{k=1}^m \in S_n$ such that
	\[\max_{1\leq k \leq m}\left \| \sum_{i=1}^k \varepsilon_i \sigma_i(v_i)\right\|_\infty \leq  \frac{\sqrt{n-1}+1}{\sqrt{n}} = 1 + o_n(1) .\]
\end{theorem}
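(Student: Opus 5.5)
The plan is an online greedy argument with a convex invariant. I would split every vector into its mean direction and its zero-sum part and control the two parts separately: the sign handles the mean and the permutation handles the zero-sum part. Write $\mathbf 1=(1,\dots,1)$ and $H=\{x:\iprod{x}{\mathbf 1}=0\}$. For $v\in B_2^n$ write $v=\mu\mathbf 1+w$ with $w\in H$. Then $n\mu^2+\|w\|_2^2\le 1$, so $|\mu|\le 1/\sqrt n$ and $\|w\|_2\le 1$. The bound in Theorem~\ref{thm:komlos-permutations} splits as
\[\frac{\sqrt{n-1}+1}{\sqrt n}=\frac1{\sqrt n}+\sqrt{\frac{n-1}{n}}.\]
The first summand should bound the mean of each partial sum. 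The second should bound the sup-norm of its zero-sum part, and $\sqrt{(n-1)/n}$ is exactly the largest $\ell_\infty$ norm of a unit vector in $H$.

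Let $s_k=\sum_{i\le k}\varepsilon_i\sigma_i(v_i)$. Permutations fix $\mathbf 1$ and preserve $H$, so the mean of $s_k$ is $\sum_{i\le k}\varepsilon_i\mu_i$ and does not depend on the $\sigma_i$. I would choose the signs greedily so that this scalar walk always stays in $[-1/\sqrt n,1/\sqrt n]$. If the current mean has sign opposite to $\mu_k$ (or is zero), take $\varepsilon_k=1$; otherwise take $\varepsilon_k=-1$. Each step has size at most $1/\sqrt n$, so this always works. Once the signs are fixed, the zero-sum parts $d_k=\sum_{i\le k}\sigma_i(\varepsilon_i w_i)$ must be controlled using only the permutations, with the increments $\varepsilon_i w_i$ being arbitrary vectors of $H\cap B_2^n$.

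For the zero-sum part I would look for a permutation-invariant set $D\subset H$ with three properties: $0\in D$, $D\subseteq\{\|d\|_\infty\le\sqrt{(n-1)/n}\}$, and for every $d\in D$ and every $w\in H\cap B_2^n$ some permutation gives $d+\sigma(w)\in D$. The natural choice of $\sigma$ is anti-sorting: the largest coordinate of $d$ is paired with the smallest coordinate of $w$, and so on, by the rearrangement inequality. The first candidate is $D=H\cap\{\|d\|_\infty\le c\}$ with $c=\sqrt{(n-1)/n}$. Induction on $k$ together with the triangle inequality $\|s_k\|_\infty\le|\mathrm{mean}(s_k)|+\|d_k\|_\infty$ then gives the theorem.

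The main obstacle is the invariance lemma. One must show that if $d\in H$ has $\|d\|_\infty\le c$ and $w\in H$ has $\|w\|_2\le1$, then the anti-sorted sum satisfies $\max_i |d_{(i)}+w_{(n+1-i)}|\le c$. I would first test the extremal configurations, such as $d=(c,-c/(n-1),\dots,-c/(n-1))$ against $w$ concentrated on one coordinate. The pairing must be checked there: $d_{(1)}=c$ is matched with $w_{(n)}\le 0$, and every other coordinate of $d$ is at most $0$ and is matched with a coordinate of $w$ at most $c$. In general, the zero-sum constraint forces $w$ to be large in only a few coordinates, and the ordering makes those meet the small coordinates of $d$.

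If the pure $\ell_\infty$ box is not invariant, I would shrink $D$ to a smaller permutation-invariant convex set. A candidate is defined by constraints on partial sums of the sorted coordinates, $\sum_{i\le j}d_{(i)}\le f(j)$ with $f$ chosen as in majorization theory. Majorization is well behaved under anti-sorted addition, so I would verify invariance by checking the partial-sum inequalities one $j$ at a time. Finally, I would check that $f(1)=\sqrt{(n-1)/n}$, together with the symmetric lower constraints, still gives the required $\ell_\infty$ bound.
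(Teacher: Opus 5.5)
\textbf{There is a genuine gap: the invariance lemma, which carries essentially all the content, is false as you state it.}

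The rest of your reduction is the same as the paper's. You split off the $\mathbf 1$-component and balance it greedily with the signs; your rule is correct and keeps the mean in $[-1/\sqrt n,1/\sqrt n]$. You then try to keep the zero-sum part in a permutation-invariant set by anti-sorting.

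Your first candidate $D=H\cap[-c,c]^n$ is not invariant for any $n\ge 4$. Take $d=(c,c,-c,-c,0,\dots,0)$ and $w=(1,\dots,1,1-n)/\sqrt{n(n-1)}\in H\cap B_2^n$. Both coordinates of $d$ equal to $c$ need a nonpositive partner from $w$, but $w$ has only one. Hence $\|d+\sigma(w)\|_\infty\ge c+1/\sqrt{n(n-1)}>c$ for every $\sigma\in S_n$, not just the anti-sorted one. For $n\le 3$ the box happens to coincide with the set the paper uses, which is why small tests pass.

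Your fallback is a direction rather than a proof, because you do not specify $f$, and the choice is delicate. The natural choice is the smallest profile forced by $H\cap B_2^n\subseteq D$, namely $f(j)=\sqrt{j(n-j)/n}$ from Cauchy--Schwarz. This already fails. For $n=4$, feed $v_1=(1,1,1,-3)/\sqrt{12}$ and $v_2=(-3,1,1,1)/\sqrt{12}$. Every $\sigma_1(v_1)+\sigma_2(v_2)$ is a permutation of $(2,2,-2,-2)/\sqrt{12}$ or of $(2,2,2,-6)/\sqrt{12}$. In both cases the top two coordinates sum to $2/\sqrt3>1=f(2)$.

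What works is $f(j)=j(n-j)/\sqrt{n(n-1)}$, i.e.\ $D=C_nP$ with $C_n=1/\sqrt{n(n-1)}$ and $P$ the regular permutahedron generated by $(n-1,n-3,\dots,1-n)$. This profile exceeds the Cauchy--Schwarz one at interior $j$, which gives containment of the ball. It agrees with it at $j=1$, which preserves the $\ell_\infty$ bound $\sqrt{(n-1)/n}$.

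Even with the right $f$, closure under anti-sorted addition does not follow from general majorization facts checked one $j$ at a time. Those facts only give $x^\downarrow+y^\uparrow\prec x^\downarrow+y^\downarrow\prec 2w$. The closure genuinely fails for other permutahedra: with generator $(3,0,0,-3)$, both $(1,1,1,-3)$ and $(-3,1,1,1)$ are majorized by it, but their sum $(-2,2,2,-2)$ violates the second partial-sum bound.

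The missing idea is the one the paper uses. $P$ is the zonotope $\sum_{i<j}[-(e_i-e_j),e_i-e_j]$. Let $H_\pm$ denote the cones of decreasing and increasing vectors. By checking the block-constant vertices of $P\cap H_+$, one shows $P\cap H_+\subseteq\sum_{i<j}[0,e_i-e_j]$. Since $P\cap H_-=-(P\cap H_+)$, this yields $(P\cap H_+)+(P\cap H_-)\subseteq P$, which is exactly the closure property your induction needs.
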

In fact, in our proof, $\varepsilon_k$ and $\sigma_k$ can be chosen ``online,'' that is, one-by-one only on the basis of $v_1, \dots, v_k$~\cite{spencer_balancing_1977}.
In the case when $v_1$ is an elementary basis vector, it is clear that $\left\| \varepsilon_1 \sigma_1(v_1)\right\|_\infty = 1$ for any choice of sign and permutation.
The bound of Theorem~\ref{thm:komlos-permutations} is therefore asymptotically optimal; moreover, an example (see the proof of Theorem~\ref{thm:komlos-permutations}) shows that when $n = 2$, the bound $\tfrac{\sqrt{n-1}+1}{\sqrt{n}} = \sqrt{2}$ is exactly tight.

We remark that this theorem may also be viewed in the setting of colorful vector balancing \cite{ambrus2024colorful}, since choosing a permutation and a sign can be reformulated as choosing, at each step, a single vector from the set $\{\pm \sigma(v_i): \sigma \in S_n\}$.

In Section~\ref{sec:p-q-extension} we extend the proof of Theorem \ref{thm:komlos-permutations} to the case of balancing prefix discrepancy for vectors in the $\ell_p$ unit ball with respect to the $\ell_q$ norm, for any $1 \leq p, q \le \infty$. Using Banaszczyk’s approach along with the property that $B_p^n \subseteq n^{(1/2-1/p)_+}B_2^n$, one can show that $$\mathrm{vb}(B_p^n,B_q^n) \lesssim O_q(n^{1/q + (1/2 - 1/p)_+})$$ for all $p$ and $q$. 
Matching lower bounds show that this dependence on $n$ is optimal \cite{reis2023vector}. In contrast, our second main theorem shows that allowing permutations as in Theorem \ref{thm:komlos-permutations} improves this bound by a factor of $n^{(1/2 - 1/p)_+}$, and that the resulting $O(n^{1/q})$ bound is sharp when $p = \infty$.
Moreover, in the special case when $q = \infty$ and $p < \infty$, we obtain the tight leading constant.

\begin{theorem}\label{thm:komlos-permutations-pq}
	For any $1 \leq p, q \leq \infty$ and any vectors $v_{1},\dots,v_m \in B_{p}^n$, there exist signs $(\varepsilon_{k})_{k=1}^m \in \{-1,1\}^m$ and permutations $(\sigma_k)_{k=1}^m \in S_n$ such that
    \begin{equation}\label{eq: pars}
        \max_{1\leq k\leq m} \left\lVert \sum_{i=1}^k \varepsilon_i \sigma_i(v_i) \right\rVert_q = O(n^{1/q}).
    \end{equation}
    In particular, when $q = \infty$, these signs and permutations satisfy
    \begin{equation}\label{eq:pq_sharp}
        \max_{1\leq k\leq m} \left \lVert \sum_{i=1}^k \varepsilon_i \sigma_{i}(v_i) \right \rVert_\infty \leq \begin{cases}
           1 + o_n(1) & 1 \leq p < \infty \\
           3 + o_n(1) & p = \infty
        \end{cases}.
    \end{equation}
\end{theorem}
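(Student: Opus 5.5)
We will reuse the online greedy strategy behind Theorem~\ref{thm:komlos-permutations}, and only change how a single vector of $B_p^n$ is compared to the partial sum. Write $S_{k-1}=\sum_{i<k}\varepsilon_i\sigma_i(v_i)$. At step $k$ we take $\varepsilon_k$ and $\sigma_k$ \emph{anti-aligned} with $S_{k-1}$: the coordinates of $v_k$, ordered by absolute value, are placed so that the largest entries of $|v_k|$ go to the coordinates where $|S_{k-1}|$ is largest, with signs opposite to those of $S_{k-1}$. A single global sign $\varepsilon_k$ cannot produce all of these signs, so the choice must be made so that, on the coordinates where $S_{k-1}$ is large, $\varepsilon_k v_k$ has the sign opposite to $S_{k-1}$. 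Concretely, we pick $\varepsilon_k$ so that the sign pattern of the large entries of $\varepsilon_k v_k$ contains at least as many coordinates opposing the large entries of $S_{k-1}$ as agreeing with them. The permutation then matches these coordinates: sign-opposing large entries go to the large coordinates of $S_{k-1}$, and the remaining entries go to coordinates where $|S_{k-1}|$ is small. The aim is to maintain an invariant of the form ``all but few coordinates of $S_k$ have absolute value at most $1$, and every coordinate has absolute value at most $C$.''

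For \eqref{eq:pq_sharp} with $p<\infty$, we use that a unit vector in $\ell_p$ has at most $t^{-p}$ coordinates of absolute value at least $t$. Fix a slowly vanishing threshold $t=t_n\to 0$ chosen so that $t_n^{-p}=o(n)$. Then only $o(n)$ entries of $v_k$ are ``large,'' and all other entries are at most $t_n$.

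The plan for this case has three steps.
\begin{enumerate}
\item Place the few large entries on coordinates where $S_{k-1}$ is either small or of opposite sign. There is room for this because $n-o(n)$ coordinates are available.
\item Observe that the small entries move any coordinate by at most $t_n$.
\item Run the potential argument from the proof of Theorem~\ref{thm:komlos-permutations}, with $n$ effectively replaced by the number of free coordinates. This should give $\|S_k\|_\infty\le 1+t_n+o(1)=1+o_n(1)$.
\end{enumerate}

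For $p=\infty$, every entry can have size $1$, so the invariant must allow coordinates of size up to $3$. The argument is a sign/matching one: half of the coordinates of $S_{k-1}$ with $|S_{k-1}|>1$ can be decreased while the others are routed to coordinates where $|S_{k-1}|\le 1$. This yields a bound of roughly $1+2=3$, up to $o_n(1)$ coming from imbalance of counts.

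For \eqref{eq: pars} with general $q$, we reduce to the $q=\infty$ bound and compare norms. By $B_p^n\subseteq B_\infty^n$, the $p=\infty$ strategy gives $\|S_k\|_\infty\le 3+o(1)$ for every $p$. Hence $\|S_k\|_q\le n^{1/q}\|S_k\|_\infty=O(n^{1/q})$, and this holds uniformly in $k$, which gives the prefix bound.

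The main obstacle is the $p=\infty$ bookkeeping: showing that the greedy sign/permutation choice keeps every coordinate bounded by $3+o(1)$ forever. The worry is that coordinates near the threshold could drift upward because only about half of them can be corrected at each step. The first approach we would try is an explicit invariant: at most $n/2+O(1)$ coordinates exceed $1$ in absolute value, and none exceeds $3$. We would then verify by case analysis that the matching preserves this invariant, possibly tightening the threshold from $1$ to $1+o(1)$ so the counts balance.
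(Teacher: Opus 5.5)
There is a genuine gap. Your proposal never identifies a set or quantity that your update rule provably preserves, so none of the three bounds is actually established.

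In the $p<\infty$ plan, steps 1--2 only control one step: small entries move a coordinate by at most $t_n$. Since $m$ is unbounded, these increments can accumulate. For zero-sum vectors whose entries all lie below $t_n$, steps 1--2 are vacuous, and bounding their partial sums is the whole problem. Step 3 hands exactly this to ``the potential argument from the proof of Theorem~\ref{thm:komlos-permutations}'', but that proof contains no potential argument.

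Theorem~\ref{thm:komlos-permutations} is proved differently. One writes $v_k=u_k+t_k\mathbf 1$ with $u_k\in\mathbf 1^\perp$ and keeps the centered partial sums inside the scaled permutahedron $C_nP$. At each step, the current partial sum is sorted decreasingly and $\eps_k u_k$ increasingly. The closure property $(P\cap H_+)+(P\cap H_-)\subseteq P$ of Lemma~\ref{lem:permutahedron}, proved via the zonotope structure and majorization, then guarantees the new sum stays in $C_nP$. Your anti-alignment instinct is close to this permutation rule, but the invariance proof is the real content, and it is missing. For $p=\infty$ you say so yourself: the invariant you propose (at most $n/2+O(1)$ coordinates above $1$, none above $3$) is asserted, not checked.

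Your sign rule also targets the wrong quantity. The coordinate sum $\langle S_k,\mathbf 1\rangle=\sum_{i\le k}\eps_i\langle v_i,\mathbf 1\rangle$ is unaffected by every permutation, so the signs alone must keep it bounded, and $\|S_k\|_\infty\ge|\langle S_k,\mathbf 1\rangle|/n$. A rule based on large entries does not see this quantity. Two examples:
\begin{itemize}
\item For $p<\infty$ and $v_k=n^{-1/p}\mathbf 1$ for all $k$, every entry is below $t_n$ (since $t_n^{-p}=o(n)$). Either sign satisfies your rule, and a constant sign gives $\|S_k\|_\infty=kn^{-1/p}\to\infty$.
\item For vectors like $\alpha e_1-\beta\sum_{j\ge 2}e_j$ with $\alpha$ large, $\beta$ small and $(n-1)\beta>\alpha$, anti-aligning the large entry moves the mean the wrong way.
\end{itemize}

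The paper decouples the two tasks. It chooses $\eps_k$ from $t_k$ alone so that $\eps_k t_k\sum_{i<k}\eps_i t_i\le 0$, which gives $|\sum_{i\le k}\eps_i t_i|\le n^{-1/p}$. Permutations act only on the centered parts.

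The one ingredient beyond the $\ell_2$ case is the inclusion $P_{\mathbf 1^\perp}B_p^n\subseteq C_{n,p}P$, which follows from H\"older. For $|S|=k$ and $x=P_{\mathbf 1^\perp}v$, one has $\sum_{i\in S}x_i=\langle v,1_S-\tfrac kn\mathbf 1\rangle\le\|1_S-\tfrac kn\mathbf 1\|_{p^*}$, to be compared with $k(n-k)$. This gives $(n-1)C_{n,p}\to 1$ for $p<\infty$ and $C_{n,\infty}=2/n$. The resulting $\ell_\infty$ bounds are $(n-1)C_{n,p}+n^{-1/p}=1+o_n(1)$ for $p<\infty$, and $2(n-1)/n+1\to 3$ for $p=\infty$. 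For $p\le 2$ you could simply have cited Theorem~\ref{thm:komlos-permutations}, since $B_p^n\subseteq B_2^n$.

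Finally, your deduction of the $O(n^{1/q})$ bound from the case $p=q=\infty$, via $B_p^n\subseteq B_\infty^n$ and $\|x\|_q\le n^{1/q}\|x\|_\infty$, is valid. It is simpler than the paper's computation of $\max_{x\in P}\|x\|_q$, but it only transfers a bound you have not proved.
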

\begin{remark}
	The bound in~\eqref{eq: pars} is unimprovable: when $p = \infty$, the single vector $v_1 = \mathbf{1}$ satisfies $\lVert \varepsilon_1 \sigma_1(v_1) \rVert_q = n^{1/q}$ for every choice of sign and permutation.
	Similarly, choosing $v_1$ to be an elementary basis vector shows that the bound in~\eqref{eq:pq_sharp} for $1 \leq p < \infty$ is asymptotically sharp.
    Perhaps surprisingly, the separation between the $p < \infty$ and $p = \infty$ cases is not an artifact of our proof. Indeed, the example $v_1 = (1,1,\dots,1)^\top$ and $v_2 = (1,-1,0,\dots,0)^{\top}$ for $n\geq 2$ shows that the constant is at least $2$. Obtaining the sharp constant for $p = q= \infty$ is an open problem.
\end{remark}

Our main tool in the proof of Theorem~\ref{thm:komlos-permutations} is a tight bound for vectors in the subspace  $\mathbf{1}^\perp$ orthogonal to the vector $\mathbf{1} = (1,1,\dots, 1)$.
In fact, vectors in this subspace can be balanced by permutations alone.
\begin{lemma}\label{lem:tight-zero-sum}
	Let $v_1,\dots, v_m \in \mathbf{1}^\perp \cap B_2^n$, then there exist permutations $(\sigma_k)_{k=1}^m \in S_n$ such that
	\[\max_{1\leq k \leq m} \left \| \sum_{i=1}^k \sigma_i(v_i)\right\|_\infty \le \sqrt{\frac{n-1}{n}}.\]
\end{lemma}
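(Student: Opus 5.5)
The plan is an online, inductive argument. We maintain the invariant that the current partial sum $S = \sum_{i\le k}\sigma_i(v_i)$ lies in
\[K := \mathbf{1}^\perp \cap c\,B_\infty^n, \qquad c := \sqrt{\tfrac{n-1}{n}}.\]
At each step we choose one permutation $\sigma$ such that $S+\sigma(v)\in K$ for the new vector $v = v_{k+1}$. Since $S$ and $v$ both sum to zero, $S+\sigma(v)\in\mathbf{1}^\perp$ automatically, so only the $\ell_\infty$ bound has to be checked. The constant $c$ is natural: it is exactly $\max\{\|w\|_\infty : w\in\mathbf{1}^\perp\cap B_2^n\}$, attained at $w=(n-1,-1,\dots,-1)/\sqrt{n(n-1)}$. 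So the base case $k=1$ holds for any $\sigma_1$, and the whole proof reduces to the following one-step statement: for every $S\in K$ and $v\in\mathbf{1}^\perp\cap B_2^n$, some permutation satisfies $S+\sigma(v)\in K$.

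For the choice of $\sigma$, I would first try the anti-sorted (rearrangement) pairing. Order $S$ as $s_1\ge\dots\ge s_n$, order the permuted vector $w=\sigma(v)$ increasingly as $w_1\le\dots\le w_n$, and add coordinatewise. The zero-sum constraints then give one-sided bounds. From $i\,s_i\le\sum_{j\le i}s_j=-\sum_{j>i}s_j\le (n-i)c$ we get $s_i\le \min\bigl(c,\tfrac{n-i}{i}c\bigr)$. Maximizing $w_i$ subject to $\sum_j w_j=0$, $\|w\|_2\le1$ and monotonicity gives $w_i\le\sqrt{\tfrac{i-1}{n(n-i+1)}}$. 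The endpoints $i=1$ and $i=n$ then give exactly $c$. The lower bound $s_i+w_i\ge -c$ follows by applying the same argument to $(-S,-v)$, since reversing both orders preserves anti-sorting.

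The main obstacle is the middle indices. These crude marginal bounds are not enough: for $i\approx n/2$ they give roughly $c+\tfrac1{\sqrt n}$. One has to use that $S$ cannot be simultaneously large at coordinate $i$ and have all its earlier coordinates large, together with its $\ell_\infty$ lower bound $-c$ on the tail, and similarly for $w$. My approach is to fix $i$ and maximize the linear functional $s_i+w_i$ jointly. In $S$, over the sorted part of $K$, this is a linear program, so extreme points have entries in $\{c,-c,t\}$ with a single free value $t$. In $w$, it is a maximization over a sorted slice of the ball, with extremizers taking two values. Having reduced to these few-parameter families, I would check $s_i+w_i\le c$ by a direct one-variable computation.

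If anti-sorting turns out to fail on some configuration, I would fall back on a geometric or averaging choice. The average of $\sigma(v)$ over all $\sigma\in S_n$ is $0$ because $v\perp\mathbf{1}$. Also, $K$ is permutation invariant and convex. So I would look for a $\sigma$ minimizing a suitable gauge of $S+\sigma(v)$, for example the smallest dilate of $K$ containing it, and show via an exchange (transposition) argument that at a minimizer no coordinate can exceed $c$. A violation at coordinate $a$ could then be fixed by swapping the values of $w$ at $a$ and at a coordinate with slack, which contradicts minimality.
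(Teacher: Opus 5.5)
Your proposal has a genuine gap: the one-step statement it reduces to is false.

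\textbf{The gap.} You need: for every $S\in K$ and every $v\in\mathbf{1}^\perp\cap B_2^n$, some $\sigma$ gives $S+\sigma(v)\in K$. This fails for every $n\ge 4$, so no rule for choosing $\sigma$ can establish it, whether anti-sorting, gauge minimization or transposition exchanges.

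Take $S\in K$ with $\lfloor n/2\rfloor$ coordinates equal to $c$, $\lfloor n/2\rfloor$ equal to $-c$, and one $0$ if $n$ is odd. Take $v=(n-1,-1,\dots,-1)/\sqrt{n(n-1)}\in\mathbf{1}^\perp\cap B_2^n$. Every coordinate $j$ with $S_j=-c$ must receive a nonnegative entry of $\sigma(v)$, since otherwise $S_j+\sigma(v)_j<-c$. But $v$ has only one nonnegative entry, and there are at least two such $j$.

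The same example shows why maximizing $s_i+w_i$ ``jointly'' cannot rescue anti-sorting. The sorted slice of $K$ and the sorted slice of the ball vary independently. So the joint maximum of $s_i+w_i$ is exactly the sum of your marginal maxima, and the excess of roughly $1/\sqrt n$ over $c$ at middle indices is actually attained. For instance, with $n=4$, $S=(c,c,-c,-c)$ and $w=(-3,1,1,1)/\sqrt{12}$ give $s_2+w_2=c+1/\sqrt{12}$.

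\textbf{The missing idea.} The invariant must be smaller than $\mathbf{1}^\perp\cap cB_\infty^n$. It has to record majorization constraints on sums of the largest coordinates, not only coordinatewise bounds. The paper keeps the partial sums in $C_nP$, where $P=\mathrm{conv}\{\sigma(w):\sigma\in S_n\}$ with $w=(n-1,n-3,\dots,1-n)$ and $C_n=1/\sqrt{n(n-1)}$. Equivalently, the sum of any $k$ coordinates is at most $k(n-k)/\sqrt{n(n-1)}$. Three facts then close the induction:
\begin{itemize}
\item $\mathbf{1}^\perp\cap B_2^n\subseteq C_nP$. This follows from Rado's theorem and Cauchy--Schwarz: $\sum_{i\in A}x_i=\langle x,1_A-\tfrac{k}{n}\mathbf{1}\rangle\le\sqrt{k(n-k)/n}$ for $|A|=k$.
\item $C_nP\subseteq cB_\infty^n$.
\item The closure property $(P\cap H_+)+(P\cap H_-)\subseteq P$. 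It comes from the zonotope description $P=\sum_{i<j}[-(e_i-e_j),e_i-e_j]$, by checking that the vertices of $P\cap H_+$ lie in $\sum_{i<j}[0,e_i-e_j]$.
\end{itemize}
With this invariant, your anti-sorted pairing is exactly the right permutation: the decreasing rearrangement of the partial sum plus the increasing rearrangement of $v_k$. For $n\le 3$ the two sets coincide, which is why small cases give no warning.
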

The bound of Lemma~\ref{lem:tight-zero-sum} is tight for all $n$.
We postpone the proof to the next section.
Our proof is constructive: we show that the permutations can be chosen greedily so that the partial sums remain in the regular permutahedron.

With the lemma in hand, the proof of the main theorem is nearly immediate.
\begin{proof}[Proof of Theorem \ref{thm:komlos-permutations}]
	For each vector from the list, denote $v_k = u_k+w_k$, where $u_k = P_{\mathbf{1}^\perp} v_k$ and $w_k = P_{\mathbf{1}} v_k$. The vectors $w_1,\dots,w_k$ are parallel to $\mathbf{1}$, and as they are all of length at most 1, each one has sup-norm at most $\frac{1}{\sqrt{n}}$.
	Write $w_k = t_k \mathbf{1}$, so that $|t_k| \leq \frac{1}{\sqrt{n}}$.
	We choose the signs greedily: let $\eps_k \in \{-1,1\}$ be such that $\eps_k t_k \sum_{i=1}^{k-1} \eps_i t_i \leq 0$.
	Since $|s + t| \leq \max(|s|, |t|)$ whenever $st \leq 0$, induction on $k$ gives
	\[\max_{1 \leq k \leq m} \left\| \sum_{i=1}^k \eps_i w_i \right\|_\infty = \max_{1 \leq k \leq m} \left| \sum_{i=1}^k \eps_i t_i \right| \le \frac{1}{\sqrt{n}}.\]
	Since the vectors $\eps_k u_k$ all lie in $B_2^n \cap \mathbf{1}^\perp$, Lemma~\ref{lem:tight-zero-sum} yields permutations $\sigma_k$ such that
	\[\max_{1 \leq k \leq m} \left \| \sum_{i=1}^k \sigma_i(\eps_i u_i)\right\|_\infty \le \sqrt{\frac{n-1}{n}}.\]

	Finally, for any vector $w$ parallel to $\mathbf{1}$ and any permutation $\sigma$, $\sigma(w)=w$. So for each $1 \leq k \leq m$, the triangle inequality gives
	\[ \left \| \sum_{i=1}^k \eps_i \sigma_i(v_i)\right\|_\infty \le \left \| \sum_{i=1}^k \sigma_i(\eps_i u_i)\right\|_\infty + \left \| \sum_{i=1}^k \eps_i w_i\right\|_\infty \le \frac{\sqrt{n-1}+1}{\sqrt{n}}.\]
	Each $\eps_k$ and $\sigma_k$ depends only on $v_1, \dots, v_k$, so this construction is online.
    The upper bound is attained in dimension 2 by taking \[v_1 = \left(\frac{1}{\sqrt{2}},-\frac{1}{\sqrt{2}}\right) , \quad  v_2 = \left(\frac{1}{\sqrt{2}},\frac{1}{\sqrt{2}}\right). \]
\end{proof}
 
\section{The tight bound in \texorpdfstring{$\mathbf{1}^\perp$}{the orthogonal complement of 1}}
The goal of this section is to prove Lemma~\ref{lem:tight-zero-sum}.
To find a tight bound, we recall the definition of majorization: A vector $x\in \R^n$ is majorized by $y\in \R^n$, if, for their decreasing rearrangements $x^*, y^*$
\[ \forall 1\le k \le n \ \sum_{i=1}^k x^*_i \le \sum_{i=1}^k y^*_i \ , \quad \sum_{i=1}^n x_i = \sum_{i=1}^n y_i .\]
A theorem of Rado \cite{rado1952inequality} states that $x$ is majorized by $y$ if and only if $x$ is in the convex hull of all $n!$ permutations of the coordinates of $y$.
Hence, we may conclude that $x~\in~\text{conv}(\sigma(y):~\sigma \in S_n ).$ 
(Such a polytope is called a permutahedron. See \cite{postnikov2009permutohedra} for a thorough introduction.)

The above discussion suggests the following strategy. We will construct a permutahedron $P$ containing $B_2^n\cap \mathbf{1}^\perp$ satisfying the following closure property: for any $x, y \in P$, there exists $\sigma \in S_n$ such that $x + \sigma(y) \in P$.
Given such a set and vectors $v_1, \dots, v_m$ in $B_2^n\cap \mathbf{1}^\perp$, we can use the closure property to ensure that the partial sums $\sum_{i=1}^k \sigma_i(v_i)$ remain in $P$ for all $k$.
Bounding the $\ell_\infty$ diameter of $P$ will then suffice to prove the claim.

To this end, define the centered regular permutahedron (sometimes called the Coxeter permutahedron) by
\begin{equation}\label{def:permutahedron}
    P:= \text{conv}(\sigma(w):\sigma \in S_n ),\ \ w=(n-1, n-3,\dots, 3-n, 1-n).
\end{equation}
We claim that it is closed under the operation of taking sums of increasing and decreasing rearrangements.

\begin{lemma}\label{lem:permutahedron}
	Denote $H_+=\{x \in \R^n: x_1\ge x_2 \ge...\ge x_n\}$, similarly $H_-=\{x \in \R^n: x_1\le x_2 \le...\le x_n\}$. It holds that
    \begin{equation} \label{eq:permutahedron-inclusion}
        (P\cap H_+)+(P\cap H_-) \subseteq P,
    \end{equation}where $+$ is the Minkowski sum.
\end{lemma}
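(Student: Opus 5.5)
By Rado's theorem, membership in $P$ is a majorization condition. Since $P$ is symmetric under coordinate permutations, $x\in P$ if and only if $\sum_i x_i = 0$ and, for every subset $S\subseteq[n]$ with $|S|=k$,
\[
\sum_{i\in S} x_i \le \sum_{i=1}^k w_i = k(n-k).
\]
The identity $\sum_{i=1}^k (n+1-2i) = k(n-k)$ is a routine computation. Take $x\in P\cap H_+$ and $y\in P\cap H_-$. The sum $x+y$ has total $0$, so it remains to show that $\sum_{i\in S}(x_i+y_i)\le k(n-k)$ for every $S$ with $|S|=k$.

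The plan is to bound the two parts separately using the monotone structure. Since $x$ is decreasing, $\sum_{i\in S} x_i$ is at most the sum of the $k$ largest coordinates of $x$. To exploit the position of $S$ more precisely, I would use the following rearrangement-type bound. For a decreasing $x$ satisfying the majorization constraints $\sum_{i\le j}x_i\le j(n-j)$ and $\sum x_i=0$, Abel summation writes $\sum_{i\in S}x_i$ as a nonnegative combination of prefix sums of $x$. Concretely,
\[
\sum_i c_i x_i = \sum_j (c_j - c_{j+1}) X_j, \qquad X_j=\sum_{i\le j} x_i,
\]
where $c=\mathbf 1_S$. This combination is not monotone, so I would instead use the following fact: for fixed $S$, the functional $x\mapsto \sum_{i\in S}x_i$ is linear. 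Its maximum over the polytope $P\cap H_+$ is therefore attained at a vertex, and likewise for $y$ over $P\cap H_-$. Better still, $(x,y)\mapsto \sum_{i\in S}(x_i+y_i)$ is linear on the product polytope, so it suffices to check vertex pairs.

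The next step is to identify the vertices of $P\cap H_+$. This set is the intersection of the permutahedron with the fundamental Weyl chamber. Its vertices are the points $\frac{1}{|W|}$-type averages: for each subset $T\subseteq\{1,\dots,n-1\}$ of "break points," the vertex is obtained from $w$ by averaging $w$ over the blocks determined by $T$. Equivalently, these are the points where some prefix constraints are tight, namely $X_j=j(n-j)$ for $j\in T$, and where $x$ is constant on the remaining blocks. The vertices of $P\cap H_-$ are the reversals of these.

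For such a block-averaged $x$ and block-averaged increasing $y$, I would verify directly that $x+y$ is majorized by $w$. One option is to note that the averaging of $w$ is a doubly stochastic image of $w$. Then $x=Aw$ and $y=B\bar w$, where $\bar w=-w$ is the reversal of $w$ and $A,B$ are block-averaging matrices. The quantity $x+y=Aw-Bw$ must then be shown to lie in $P$.

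A cleaner route I would try first avoids vertices and uses a supporting-function argument. The support function of $P$ in direction $c$ is
\[
h_P(c)=\sum_i c^*_i w_i,
\]
the sum of sorted $c$ paired with sorted $w$. We need
\[
\langle c,x\rangle + \langle c,y\rangle \le h_P(c).
\]
For decreasing $x$, the maximum of $\langle c,x\rangle$ over $P\cap H_+$ equals $\langle \tilde c, w\rangle$, where $\tilde c$ is the decreasing-majorant-type rearrangement (the running averages) of $c$. For increasing $y$ the analogous expression uses the reversal. Combining the two and comparing with $\sum c^*_i w_i$ is the crux; here $w$ being an arithmetic progression should make the needed inequality a telescoping identity. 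Testing $c=\mathbf 1_S$ with $w_i=n+1-2i$ is enough, since the facet normals of $P$ are exactly such indicators. This reduces the claim to the following combinatorial inequality. Let $S$ have $a$ elements among positions $\le j$ in some structured sense; the required bound is that the best decreasing contribution plus the best increasing contribution is at most $k(n-k)$. I expect this final combinatorial inequality, which compares the optimal $x$ for $S$ with the optimal $y$ for $S$, to be the main obstacle.

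To handle it, I would split $S$ into its "early" part and "late" part. The decreasing $x$ benefits from early indices, and the increasing $y$ from late ones. Using the arithmetic structure $k(n-k)=\sum_{i\le k}(n+1-2i)$, I would then show that the two bounds sum to at most $k(n-k)$ by a direct calculation.
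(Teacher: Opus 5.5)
Your preliminary reductions are sound. Membership in $P$ is the family of inequalities $\sum_{i\in S}x_i\le k(n-k)$ together with $\sum_i x_i=0$. Linearity reduces the claim to pairs of vertices. The vertices of $P\cap H_+$ are the block averages of $w$ over consecutive blocks; you assert this, and the paper proves it by noting that tightness of the $k$-th prefix constraint forces $x_k-x_{k+1}\ge 2$, so exactly one of the two constraints indexed by $k$ is tight at a vertex. But the proposal stops where the actual content of the lemma begins. Both of your routes end in an unproved claim: that $Aw-Bw\in P$ for block-averaging matrices $A,B$, or equivalently that $\max_{P\cap H_+}\sum_{i\in S}x_i+\max_{P\cap H_-}\sum_{i\in S}y_i\le k(n-k)$. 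You label this the main obstacle and defer it to an unspecified ``direct calculation'' based on an early/late split of $S$. No mechanism is given for dividing the budget $k(n-k)$ between the two halves, so as it stands this is a reformulation of the lemma, not a proof of it.

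The missing ingredient in the paper is the zonotope structure $P=\sum_{i<j}[-(e_i-e_j),e_i-e_j]$. The paper shows $P\cap H_+\subseteq\sum_{i<j}[0,e_i-e_j]$ by writing each block-averaged vertex explicitly as $\sum(e_i-e_j)$ over the pairs $i<j$ lying in different blocks. Then $P\cap H_-=-(P\cap H_+)$ lies in the opposite half-zonotope, and the two halves add up to exactly $P$.

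In your support-function language, the right split is $k(n-k)=\#\{i<j:i\in S,\,j\notin S\}+\#\{i<j:i\notin S,\,j\in S\}$, that is, $h_P(c)=\sum_{i<j}(c_i-c_j)_++\sum_{i<j}(c_j-c_i)_+$. The first count bounds the $H_+$ maximum and the second bounds the $H_-$ maximum; this is not a split of $S$ by position.

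Your first route can also be closed directly, using the arithmetic-progression structure you suspected matters. Averaging $w$ over a block of consecutive positions gives the same values on that block as averaging $w$ with its reversal on that block. Hence $Aw=\tfrac12(w+\pi_A w)$, where $\pi_A$ is the permutation reversing each block. It follows that $Aw-Bw=\tfrac12(\pi_Aw-\pi_Bw)\in\tfrac12P+\tfrac12P=P$, using $P=-P$ and convexity. This is the same fact as the paper's inclusion, since $\sum_{i<j}[0,e_i-e_j]=\tfrac12(w+P)$. Some such argument has to be supplied for your proof to be complete.
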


We will use the following facts about vectors in $P$.
\begin{fact}
	Let $P\subset \R^n$ be the centered regular permutahedron. Then, $P$ is a zonotope given by
	\[\sum_{1\le i< j \le n}[-(e_i - e_j), e_i-e_j].\]
	In particular, it is the image of the cube $[-1,1]^N$, where $N= \binom{n}{2}$ under the projection
	$L:\R^N\to \R^n$ given by
	\begin{equation}\label{eq:proj-to-premu}
			L(v)=\sum_{(i,j)\in {n\choose 2}} v_{(i,j)}(e_i-e_j).
	\end{equation}

\end{fact}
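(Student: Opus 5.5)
The plan is to compare support functions: two compact convex sets coincide exactly when their support functions agree. I will first dispose of the ``in particular'' clause, which is formal. The cube $[-1,1]^N$ is the Minkowski sum $\sum_{(i,j)}[-e_{(i,j)},e_{(i,j)}]$ of coordinate segments in $\R^N$. A linear map sends Minkowski sums to Minkowski sums and segments to segments, so
\[
L([-1,1]^N)=\sum_{1\le i<j\le n}[-(e_i-e_j),e_i-e_j]=:Z .
\]
It therefore suffices to show $Z=P$.

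\textbf{Support function of $Z$.} For any $c\in\R^n$,
\[
h_Z(c)=\sum_{i<j}\max_{|t|\le 1} t\,(c_i-c_j)=\sum_{i<j}|c_i-c_j| .
\]
Suppose $c$ has distinct coordinates. Then this maximum is attained at the unique point
\[
z(c)=\sum_{i<j}\operatorname{sign}(c_i-c_j)(e_i-e_j).
\]
Its $k$-th coordinate is $\#\{j: c_j<c_k\}-\#\{j:c_j>c_k\}$. If $c_k$ is the $r$-th largest coordinate of $c$, this equals $(n-r)-(r-1)=n+1-2r$. Hence $z(c)=\sigma(w)$, where $\sigma$ is the permutation placing the entries of $w=(n-1,n-3,\dots,1-n)$ in the same relative order as the entries of $c$.

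\textbf{Support function of $P$.} By definition $h_P(c)=\max_{\sigma}\iprod{c}{\sigma(w)}$. By the rearrangement inequality, this maximum is attained when $\sigma(w)$ is ordered like $c$, which is precisely $z(c)$. So $h_P(c)=\iprod{c}{z(c)}=h_Z(c)$ for every $c$ with distinct coordinates.

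\textbf{Conclusion.} Vectors with distinct coordinates are dense in $\R^n$, and support functions of compact convex sets are continuous. Hence $h_P=h_Z$ everywhere, and therefore $P=Z$. As a by-product, the vertices of $Z$ are exactly the points $\sigma(w)$.

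The only step requiring care is the vertex computation $z(c)=\sigma(w)$, in particular the counting that yields $n+1-2r$ and its match with the ordering from the rearrangement inequality. Ties in $c$ are handled by the density and continuity argument rather than treated directly.
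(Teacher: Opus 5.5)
Your proof is correct and complete. The paper gives no proof of this Fact. It defers to Postnikov and to Ardila et al.\ for the standard identification of the permutahedron with the graphical zonotope of the complete graph, $\operatorname{conv}(\sigma(n-1,n-2,\dots,0):\sigma\in S_n)=\sum_{i<j}[e_i,e_j]$. The stated form follows from this by the affine map $x\mapsto 2x-(n-1)\mathbf{1}$, since $2[e_j,e_i]-(e_i+e_j)=[-(e_i-e_j),e_i-e_j]$ and $\sum_{i<j}(e_i+e_j)=(n-1)\mathbf{1}$.

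Your support-function comparison replaces that citation with a self-contained argument. The formula $h_Z(c)=\sum_{i<j}|c_i-c_j|$, the exposed-point count $z(c)_k=n+1-2r$, and the rearrangement-inequality evaluation of $h_P$ are all right. Density plus continuity legitimately handles ties. You could also avoid that step: both support functions are invariant under permuting the coordinates of $c$, and for $c_1\ge\dots\ge c_n$ (ties allowed) one has $\sum_{i<j}(c_i-c_j)=\sum_k(n+1-2k)c_k=\iprod{c}{w}=h_P(c)$.

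Beyond self-containment, your route yields the explicit formula $h_P(c)=\sum_{i<j}|c_i-c_j|$. It also exhibits each vertex $\sigma(w)$ as the image under $L$ of a $\pm1$ vector. This is the same counting the paper redoes at the end of the proof of Lemma~\ref{lem:permutahedron}, in the special case where every block $S_t$ is a singleton. The citation, in turn, buys brevity and places $P$ within the general theory of graphical zonotopes and generalized permutohedra.
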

This is sometimes called the graphical zonotope. For a proof, see \cite{postnikov2009permutohedra} and see \cite{ardila2020arithmetic} for more on the specific case of the centered regular permutahedron. For a short introduction to the permutahedron, see \cite{barvinok2025course}, and for the classical reference see \cite{marshall1979inequalities}.

\begin{proof}[Proof of Lemma \ref{lem:permutahedron}]
	The main step in the proof is to show that, for $N= {n\choose 2}$ and $L$ as in \eqref{eq:proj-to-premu},

	\begin{equation}\label{eq:inclusion-decreasing}
	P\cap H_+ \subseteq \sum_{i<j} [0, e_i-e_j] = L([0,1]^N).
\end{equation}
The lemma would then follow directly, since $H_+=-H_-$ and $P=-P$,
\begin{align*}
	(P\cap H_+)+(P\cap H_-) &= (P\cap H_+)-(P\cap H_+)  \\
	& \subseteq  \sum_{i<j} [0, e_i-e_j] + \sum_{i<j} [e_j-e_i,0] \\
	&= L([0,1]^N) + L([-1,0]^N) = L([-1,1]^N)=P. 
\end{align*}

Let us proceed with proving \eqref{eq:inclusion-decreasing}. The coordinates of $P\cap H_+$ are decreasing, and are majorized by $w$, so this set is defined by the following inequalities in the $(n-1)$ dimensional space $\mathbf{1}^\perp = \{x: \sum_{i=1}^n x_i = 0 \}$:
\begin{align}
        &\sum_{i=1}^k x_i \le \sum_{i=1}^k w_i = k(n-k) & 1\le k \le n-1 \label{eq:majorization-cond}\\
        &x_k \ge x_{k+1} & 1\le k \le n-1. \label{eq:monotonicity-cond}
\end{align}
A vertex $x$ of the polytope must attain equality in $n-1$ of the $2(n-1)$ inequalities above. If the $k$-th coordinate has equality in $\eqref{eq:majorization-cond}$, then
\[x_k \ge k(n-k) - (k-1)(n-k+1) \text{ and } x_{k+1}\le (k+1)(n-k-1) - k(n-k)\]
which imply that the equality in \eqref{eq:monotonicity-cond} cannot hold. Since $x_n = -\sum_{i=1}^{n-1} x_i$, we conclude that any extremal point of $P\cap H_+$ is by equality in either \eqref{eq:majorization-cond} or \eqref{eq:monotonicity-cond} in each of the coordinates $x_1,\dots, x_{n-1}$.

Let us assume that the following coordinates have equality in \eqref{eq:majorization-cond}
\[0<k_1<k_2<\dots<k_m< n,\]
then, with the conventions $k_0 := 0$ and $k_{m+1} := n$, for each $0 \le r \le m$ and each $k_r~<~k~<~k_{r+1}$ equality holds in \eqref{eq:monotonicity-cond}, i.e. there exists some constant $c_r$ such that
\[c_r=x_{k_r +1}=x_{k_r +2} = \dots = x_{k_{r+1}}.\] 
Since there are $k_{r+1}-k_r$ equal numbers above, and due to the equality \eqref{eq:majorization-cond} at the endpoints,
\[c_r (k_{r+1}-k_r) = \sum_{i=k_r +1}^{k_{r+1}} x_i = \sum_{i=1}^{k_{r+1}}x_i-\sum_{i=1}^{k_r}x_i = k_{r+1}(n-k_{r+1})-k_{r}(n-k_{r})\]
so $c_r = \frac{k_{r+1}(n-k_{r+1})-k_{r}(n-k_{r})}{(k_{r+1}-k_r)} = n-k_r - k_{r+1}$.

We now produce points $u\in [0,1]^N$ that project to the vertex, i.e. $L(u)=x$, which will complete the proof. Denote
\[S_t=\{k: k_t < k \le k_{t+1}\}, \quad 0 \le t \le m,\]
and define
\[u_{ij}=\begin{cases}
	0 & \exists t \text{ s.t. } j,i\in S_t\\
	1 & \text{otherwise}
\end{cases},\]
namely that the vector indexed at $(i,j)$ is 1 whenever $i$ and $j$ are in different index sets $S_t,S_s$.

We now verify that $L(u)_k = x_k$, where $k\in S_r$ for a fixed $r$, recalling that we have shown in this case that $x_k=c_r$. We must count exactly how many times $e_k$ appears in 
\[\sum_{i<j,\ i\in S_t \ j\in S_s} e_i-e_j\]
when $s\neq t$. The $k$-th coordinate $e_k$ plays the role of the first vector $e_i$ when $j\in \cup_{t=r+1}^m S_t$, so $n-k_{r+1}$ times. It plays the role of the second vector $e_j$ when $i\in \cup_{t=0}^{r-1} S_t$, exactly $k_r$ times. Since in the latter case it appears with a negative sign, we conclude $L(u)_k =n - k_{r+1}-k_r$, exactly matching $c_r$, and the proof is complete.
\end{proof}

Applying the strategy proposed above requires scaling $P$ so that it contains $B_2^n \cap \mathbf{1}^\perp$.
\begin{fact}\label{fact:smallest-permutahedron}
	Let $  P\subseteq \R^n$ be the centered regular permutahedron. If $C_n = \sqrt{\frac{1}{n(n-1)}} =\frac{1}{n-1}   \sqrt{\frac{n-1 }{n}}$, then $B_2^n \cap \mathbf{1}^\perp \subseteq C_n  P$.
\end{fact}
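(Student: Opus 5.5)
Since $P$ is a zonotope, I will prove the inclusion $B_2^n\cap\mathbf{1}^\perp\subseteq C_nP$ by comparing support functions. The support function of $P=\sum_{i<j}[-(e_i-e_j),e_i-e_j]$ in a direction $\theta$ is
\[
h_P(\theta)=\sum_{i<j}|\theta_i-\theta_j|.
\]
Both sets are symmetric convex bodies in $\mathbf{1}^\perp$. It therefore suffices to show that $h_P(\theta)\ge \frac{1}{C_n}=\sqrt{n(n-1)}$ for every unit vector $\theta\in\mathbf{1}^\perp$. For such $\theta$, the support function of $B_2^n\cap\mathbf{1}^\perp$ equals $1$, and directions with a component along $\mathbf{1}$ reduce to their projection onto $\mathbf{1}^\perp$.

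Equivalently, I need the inequality
\[
\sum_{i<j}|\theta_i-\theta_j|\ \ge\ \sqrt{n(n-1)}\,\|\theta\|_2 \qquad \text{for } \theta\in\mathbf{1}^\perp.
\]
The left side is convex and positively homogeneous in $\theta$. I would minimize it over the unit sphere of $\mathbf{1}^\perp$, or more conveniently over the dual description. A cleaner route is to compute the inradius of $P$ directly. That inradius is the minimum distance from the origin to a facet hyperplane. The facets of $P$ are given by the majorization inequalities \eqref{eq:majorization-cond}, applied to all coordinate subsets, i.e. $\sum_{i\in S}x_i\le |S|(n-|S|)$ for $\emptyset\ne S\subsetneq[n]$.

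For a fixed $S$ with $|S|=k$, the normal vector within $\mathbf{1}^\perp$ is the projection of $\mathbf{1}_S$ onto $\mathbf{1}^\perp$, namely $\mathbf{1}_S-\frac{k}{n}\mathbf{1}$. On $\mathbf{1}^\perp$ this gives the same functional, and its norm is $\sqrt{k(n-k)/n}$. The distance from the origin to the facet is therefore
\[
\frac{k(n-k)}{\sqrt{k(n-k)/n}}=\sqrt{n\,k(n-k)}.
\]
This is minimized at $k=1$ (or $k=n-1$), giving inradius $\sqrt{n(n-1)}$. Hence $B_2^n\cap\mathbf{1}^\perp\subseteq \frac{1}{\sqrt{n(n-1)}}P=C_nP$.

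The identity $C_n=\frac1{n-1}\sqrt{\frac{n-1}{n}}$ is just algebra. The one point needing care is that the inequalities $\sum_{i\in S}x_i\le |S|(n-|S|)$ over all $S$, together with $\sum_i x_i=0$, indeed describe $P$. This follows from Rado's theorem as recalled above: majorization by $w$ is exactly the condition that every sum of $k$ coordinates is at most the sum of the $k$ largest entries of $w$, which is $k(n-k)$. I expect no real obstacle beyond this bookkeeping.
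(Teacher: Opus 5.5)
Your proof is correct and is essentially the paper's argument. The paper verifies the majorization inequalities $\sum_{i\in S}x_i\le C_n\,k(n-k)$ directly, applying Cauchy--Schwarz against $\mathbf{1}_S-\frac{k}{n}\mathbf{1}$ and then using $k(n-k)\ge n-1$; this is exactly your facet-distance (inradius) computation written pointwise, and your opening support-function inequality is never proved but also not needed.
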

\begin{proof}
	Let $x\in B_2^n\cap \mathbf{1}^\perp$, we must check that it is majorized by $C_n  w =\frac{1}{n-1}   \sqrt{\frac{n-1 }{n}} (n-1, n-3,\dots, 3-n, 1-n)$. Indeed, for any index set $S$ with $|S|=k$ ($0<k<n$), 
	\[ \sum_{i\in S} x_i = \iprod{x }{1_S - \frac{k}{n}\mathbf{1}}\le \|x\|_2 \sqrt{\frac{k(n-k)}{n}} \le \sqrt{\frac{n-1}{n}} \sum_{i=1}^{k}\frac{w_i}{n-1}\]
	where in the first equality we used the fact that $x \in \mathbf{1}^\perp$.
	\end{proof}
\begin{remark}
	It is evident that there is equality when $k=1$, and it is simple to check that the vector which attains the equality is $(\sqrt{\frac{n-1}{n}},-\frac{1}{\sqrt{n(n-1)}},\dots,  -\frac{1}{\sqrt{n(n-1)}})$.
\end{remark}
We can now prove a tight bound in the main Lemma \ref{lem:tight-zero-sum}.
\begin{proof}[Proof of Lemma \ref{lem:tight-zero-sum}]
	According to Fact \ref{fact:smallest-permutahedron}, $v_1,\dots,v_m \in C_n P$. We prove Lemma \ref{lem:tight-zero-sum} by an induction on the prefix $k \in [m]$, ensuring the same permutations are used at each inductive step. For $k=1$, the statement is trivial. For general $k$,
	assume we have constructed permutations $\sigma_{1},\dots,\sigma_{k-1}$ such that 
	\[u = \sum_{i=1}^{k-1} \sigma_i(v_i) \in C_n P.\]
    Take $\sigma_+$ to be the decreasing rearrangement of $u$ and $\sigma_-$ to be the increasing rearrangement of $v_k$. Again by Lemma \ref{lem:permutahedron},
    \[\sigma_+(u)+\sigma_-(v_k)\in (C_n P\cap H_+)+(C_n P\cap H_- ) \subseteq C_n P.\]
    Set $\sigma_{k} = \sigma_+^{-1} \circ \sigma_{-}$. Then
    $$
        \sum_{i=1}^k \sigma_{i}(v_i) = \sigma_+^{-1} (\sigma_+(u)+\sigma_-(v_k))\in C_n P.
    $$
    We obtain by induction that there exist permutations such that $\sum_{i=1}^k \sigma_i(v_i) \in C_n P$ for all $k \in [m]$.
    
    The bound then follows from the fact that $C_n P \subseteq \sqrt{\frac{n-1}{n}}[-1,1]^n$, since $C_n P$ is generated by the vector $\sqrt{\frac{n-1}{n}} \left(1, \frac{n-3}{n-1},\dots, -\frac{n-3}{n-1},-1\right)$.
\end{proof}

\section{Extensions for \texorpdfstring{$\ell_p$-to-$\ell_q$}{lp-to-lq} permutation vector balancing}\label{sec:p-q-extension}
In this section, we prove Theorem~\ref{thm:komlos-permutations-pq}, which extends our main result to the case of $\ell_p$-to-$\ell_q$ permutation vector balancing. Prior work on vector discrepancy \cite{reis2023vector} shows that for $m\leq n$, $2\leq p \leq q \leq \infty$, and $\{p,q\} \neq \{2,\infty\},$
$$
    \min_{\eps \in \left\{\pm 1\right\}^m}\left\lVert\sum_{i=1}^m \eps_i v_i\right\rVert_q \lesssim \frac{\sqrt{\min(p, \log(\tfrac{2n}{m}))}}{1/2 - 1/p + 1/q}m^{1/2 + 1/q - 1/p}.
$$

Our proof follows the same steps as in the proof of Theorem~\ref{thm:komlos-permutations}: we greedily choose signs to balance the component in the $\mathbf 1$ direction, and use permutations to ensure that the partial sums lie in a suitable rescaling of the permutahedron $P$ that contains $P_{ \mathbf{1}^\perp}B_p^n$. 

Let $R_{n,q} = \max_{x \in P}\lVert x\rVert_{q}$ and $C_{n,p} > 0$ be the quantity
$$
    C_{n,p} = \begin{cases}
        \frac{1}{n}\left( 1 + (n-1)^{1-p^\ast} \right)^{1/p^\star} & 1 < p < \infty \\
        \frac{1}{n} & p = 1 \\
        \frac{2}{n} & p = \infty
    \end{cases},
$$
where $p^\ast = \frac{p}{p-1}$.
We will show that $P_{ \mathbf{1}^\perp}B_p^n \subseteq C_{n,p} P$.
As a sanity check, for the case $p = 2$, we exactly recover the constant $C_{n} = C_{n,2} = \sqrt{\frac{1}{n(n-1)}}$ obtained in Fact \ref{fact:smallest-permutahedron}. 

We prove the following explicit version of Theorem \ref{thm:komlos-permutations-pq}.
\begin{theorem}
    If $1\leq q \leq \infty$, for any vectors $v_{1},\dots,v_m \in B_{p}^n$ for $1\leq p \leq \infty$, there exist signs $(\varepsilon_{k})_{k=1}^m \in \{-1,1\}^m$ and permutations $(\sigma_k)_{k=1}^m \in S_n$ such that
    \begin{equation}\label{eq:pars-explicit}
        \max_{1\leq k\leq m} \left\lVert \sum_{i=1}^k \varepsilon_i \sigma_i(v_i) \right\rVert_q \leq R_{n,q}C_{n,p} + n^{1/q - 1/p}=O(n^{1/q}).
    \end{equation}
    Moreover, $R_{n, \infty} C_{n, p} = 1 + o_n(1)$ when $p < \infty$ and $R_{n, \infty} C_{n, \infty} = 2 + o_n(1)$.
\end{theorem}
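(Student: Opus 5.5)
The plan is to copy the proof of Theorem~\ref{thm:komlos-permutations}. Write each $v_k = u_k + t_k\mathbf{1}$ with $u_k = P_{\mathbf{1}^\perp}v_k$ and $t_k = \frac{1}{n}\sum_i (v_k)_i$.

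\emph{The $\mathbf{1}$ component.} By H\"older, $|t_k| \le \frac{1}{n}\|v_k\|_1 \le \frac{1}{n} n^{1-1/p} = n^{-1/p}$. Choosing the signs greedily as before gives $|\sum_{i\le k}\eps_i t_i| \le n^{-1/p}$ for every prefix. Hence this component contributes at most $n^{-1/p}\|\mathbf{1}\|_q = n^{1/q-1/p}$ to the $\ell_q$ norm. Since $\sigma(\mathbf{1}) = \mathbf{1}$, the permutations chosen later do not disturb it.

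\emph{The $\mathbf{1}^\perp$ component.} The key new step is the inclusion $P_{\mathbf{1}^\perp}B_p^n \subseteq C_{n,p}P$. By Rado's theorem, as in Fact~\ref{fact:smallest-permutahedron}, it suffices to show for every $x\in B_p^n$ and every $|S|=k$, $0<k<n$, that
\[\sum_{i\in S}(P_{\mathbf{1}^\perp}x)_i = \iprod{x}{1_S - \tfrac{k}{n}\mathbf{1}} \le \|1_S - \tfrac kn\mathbf{1}\|_{p^*} \le C_{n,p}\, k(n-k).\]
The dual norm is $\|1_S-\tfrac kn \mathbf 1\|_{p^*} = \frac1n\big(k(n-k)^{p^*} + (n-k)k^{p^*}\big)^{1/p^*}$. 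Dividing by $k(n-k)$ turns the inequality into
\[\big(k^{1-p^*}+(n-k)^{1-p^*}\big)^{1/p^*} \le \big(1+(n-1)^{1-p^*}\big)^{1/p^*}.\]
Since $1-p^*<0$, the map $s\mapsto s^{1-p^*}$ is convex and decreasing. So $k\mapsto k^{1-p^*}+(n-k)^{1-p^*}$ is convex on $[1,n-1]$ and is maximized at the endpoints $k=1$ and $k=n-1$, which gives exactly $C_{n,p}$.

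The endpoint cases $p=1$ and $p=\infty$ are handled separately. For $p=1$ we have $p^*=\infty$, and $\|1_S - \frac kn\mathbf 1\|_\infty = \max(k,n-k)/n \le k(n-k)/n$ whenever $1\le k\le n-1$. For $p=\infty$ we have $p^*=1$, so the norm equals $2k(n-k)/n$, giving $C_{n,\infty}=2/n$.

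With the inclusion in hand, the greedy rearrangement argument from the proof of Lemma~\ref{lem:tight-zero-sum} applies verbatim, using Lemma~\ref{lem:permutahedron} after scaling by $C_{n,p}$. It keeps every prefix sum $\sum_{i\le k}\sigma_i(\eps_i u_i)$ inside $C_{n,p}P$, so its $\ell_q$ norm is at most $C_{n,p}R_{n,q}$. Note that $\eps_i u_i$ also lies in $P_{\mathbf{1}^\perp}B_p^n$, since that set is symmetric. The triangle inequality then yields \eqref{eq:pars-explicit}.

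\emph{Asymptotics.} Every point of $P$ is a convex combination of permutations of $w$, so $R_{n,q} = \|w\|_q$, because the norm is convex and permutation invariant. Hence $R_{n,\infty} = n-1$ and $R_{n,q} \le (n-1)n^{1/q}$. For $1<p<\infty$ we have $1+(n-1)^{1-p^*}\to 1$, so $R_{n,\infty}C_{n,p} = \frac{n-1}{n}(1+o(1)) = 1+o_n(1)$. For $p=1$ the product is exactly $\frac{n-1}{n}$, and for $p=\infty$ it is $\frac{2(n-1)}{n} = 2+o_n(1)$. The additive term $n^{-1/p}$ (at $q=\infty$) is $o_n(1)$ for $p<\infty$ and equals $1$ for $p=\infty$, which recovers \eqref{eq:pq_sharp}.

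For the $O(n^{1/q})$ claim, $R_{n,q}C_{n,p} \le (n-1)n^{1/q}\cdot\frac{2}{n} \le 2n^{1/q}$, since $C_{n,p}\le 2/n$ in every case. Also $n^{1/q-1/p}\le n^{1/q}$.

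The main obstacle is the dual-norm computation and the convexity check that the worst $k$ is $k=1$. The remaining steps are bookkeeping.
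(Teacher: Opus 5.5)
Your proposal is correct and follows the paper's proof essentially step for step. It uses the same splitting into the $\mathbf{1}$ and $\mathbf{1}^\perp$ components with greedy signs, the same H\"older/dual-norm computation of $C_{n,p}$ with convexity placing the worst case at $k=1$, and the same greedy rearrangement inside $C_{n,p}P$. You also make explicit the bounds $R_{n,q}=\|w\|_q\le (n-1)n^{1/q}$ and $C_{n,p}\le 2/n$, which the paper leaves implicit and which justify the $O(n^{1/q})$ claim.
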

This Theorem directly follows from the following analogue of Lemma~\ref{lem:tight-zero-sum}.
\begin{lemma}\label{lem: l_q - l_p}
    Fix $1 \leq p \leq \infty$ and $1 \leq q \leq \infty$. 
    Let $R_{n, q}$ and $C_{n, p}$ be as above.
    Then for any vectors $v_1,\dots, v_m \in P_{\mathbf{1}^\perp}B_{p}^n$ each $i=1,\dots,m$, there exist permutations $(\sigma_k)_{k=1}^m \in S_n$ such that
    $$
        \max_{1\leq k \leq m}\left\lVert \sum_{i=1}^k \sigma_i(v_i) \right\rVert_q \leq R_{n,q} C_{n,p}.
    $$
\end{lemma}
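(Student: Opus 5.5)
The plan mirrors the proof of Lemma~\ref{lem:tight-zero-sum}. There are two ingredients: the closure property of Lemma~\ref{lem:permutahedron}, which is scale invariant, and an inclusion $P_{\mathbf{1}^\perp}B_p^n \subseteq C_{n,p}P$ replacing Fact~\ref{fact:smallest-permutahedron}. Once the inclusion is in hand, the inductive greedy construction goes through verbatim. Let $u=\sum_{i<k}\sigma_i(v_i)\in C_{n,p}P$. Choose $\sigma_+$ sorting $u$ decreasingly and $\sigma_-$ sorting $v_k$ increasingly, and set $\sigma_k=\sigma_+^{-1}\circ\sigma_-$. Then $\sum_{i\le k}\sigma_i(v_i)\in C_{n,p}P$ for every $k$, since $C_{n,p}P$ is permutation invariant and \eqref{eq:permutahedron-inclusion} holds for every dilate of $P$. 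Finally, every point of $C_{n,p}P$ has $\ell_q$ norm at most $C_{n,p}R_{n,q}$ by the definition of $R_{n,q}$, which gives the stated bound.

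So the real content is the inclusion. By Rado's theorem, a point $x\in\mathbf{1}^\perp$ lies in $C_{n,p}P$ iff for each $1\le k\le n-1$ and each $S\subseteq[n]$ with $|S|=k$,
\[\sum_{i\in S}x_i\le C_{n,p}\,k(n-k).\]
Take $x=P_{\mathbf{1}^\perp}v$ with $\|v\|_p\le1$. As in Fact~\ref{fact:smallest-permutahedron}, write $\sum_{i\in S}x_i=\iprod{v}{1_S-\tfrac kn\mathbf{1}}$. H\"older's inequality then gives
\[\sum_{i\in S}x_i\le \|1_S-\tfrac kn\mathbf 1\|_{p^*}=\Bigl(k\bigl(\tfrac{n-k}{n}\bigr)^{p^*}+(n-k)\bigl(\tfrac kn\bigr)^{p^*}\Bigr)^{1/p^*}=:f(k).\]
It therefore suffices to show $f(k)\le C_{n,p}k(n-k)$ for all $1\le k\le n-1$. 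At $k=1$ one has $f(1)=\tfrac1n\bigl((n-1)^{p^*}+(n-1)\bigr)^{1/p^*}=\tfrac{n-1}{n}\bigl(1+(n-1)^{1-p^*}\bigr)^{1/p^*}$, which is exactly $C_{n,p}(n-1)$. So the defined constant is precisely what the $k=1$ constraint forces. The endpoint cases are separate computations: for $p=1$ use $p^*=\infty$ and $f(k)=\max(\tfrac{n-k}n,\tfrac kn)$; for $p=\infty$ use $p^*=1$ and $f(k)=\tfrac{2k(n-k)}{n}$, which matches $C_{n,\infty}=2/n$ with equality for every $k$.

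The main obstacle is the monotonicity claim that $g(k):=f(k)/(k(n-k))$ is maximized at $k=1$ (equivalently, by the symmetry $k\leftrightarrow n-k$, at $k=n-1$). Write $a=k$ and $b=n-k$, so that $n\,g(k)=\bigl(ab^{p^*}+ba^{p^*}\bigr)^{1/p^*}/(ab)=\bigl(a^{1-p^*}+b^{1-p^*}\bigr)^{1/p^*}$. Since $1-p^*\le0$, the function $t\mapsto t^{1-p^*}$ is convex and decreasing on $(0,\infty)$. For fixed $a+b=n$ with $a,b\ge1$, the convex symmetric sum $a^{1-p^*}+b^{1-p^*}$ is maximized at the extreme point $a=1$, $b=n-1$ (or symmetrically). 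This yields exactly $C_{n,p}$, and the same argument covers $p=1$ in the limit.

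The asymptotics in the subsequent theorem then come from $R_{n,\infty}=n-1$. Indeed, $C_{n,p}(n-1)\to1$ for $p<\infty$ because $(n-1)^{1-p^*}\to0$, while for $p=\infty$ we get $\tfrac{2(n-1)}n\to2$.
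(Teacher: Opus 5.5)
Your proposal is correct and takes essentially the same approach as the paper. You reduce to the inclusion $P_{\mathbf{1}^\perp}B_p^n \subseteq C_{n,p}P$ via Rado's theorem and H\"older's inequality against $1_S - \frac{k}{n}\mathbf{1}$, maximize $\frac{1}{n}\bigl(k^{1-p^*}+(n-k)^{1-p^*}\bigr)^{1/p^*}$ by convexity at $k=1$ (with the same direct computations for $p=1$ and $p=\infty$), and then rerun the greedy sorting argument of Lemma~\ref{lem:tight-zero-sum} inside the dilate $C_{n,p}P \subseteq R_{n,q}C_{n,p}B_q^n$.
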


\begin{proof}
   Following the proof of Lemma~\ref{lem:tight-zero-sum}, it suffices to show that $P_{\mathbf{1}^\perp}B_{p}^n \subseteq C_{n,p} P$ and $P \subseteq R_{n,q}B_{q}^n$. This would imply that the same rearrangement procedure as in Lemma \ref{lem:tight-zero-sum} will apply verbatim. Since the maximum $\ell_q$ norm of elements of $P$ occurs at its vertices,
    $$R_{n,q} = \begin{cases}(\sum_{k=1}^{n} |n - 2k + 1|^q)^{1/q} \sim (1+q)^{-1/q}n^{1+1/q} & q < \infty\\
    n-1 & q = \infty
    \end{cases}.
    $$
    Suppose $1 < p < \infty$. To find $C_{n,p}$, for index set $S \subsetneq [n]$ of size $|S| = k$ and $x = P_{\mathbf{1}^\perp}v \in P_{\mathbf{1}^\perp}B_p^n,$ if $p^\ast$ is the H\"older conjugate of $p$,
    $$ 
        \sum_{i \in S}x_i \leq \lVert v \rVert_p \lVert{1}_S - \frac{k}{n}\mathbf{1}\rVert_{p^\ast}.
    $$
    Hence, optimizing over all such index sets gives
    $$
        \max_{S\subseteq [n]: |S| = k}\sum_{i \in S} x_i \leq d_{n,p}(k), \quad d_{n,p}(k) = \left(\left(\frac{k}{n}\right)^{p^\ast}(n-k) + \left(1-\frac{k}{n}\right)^{p^\ast} k\right)^{1/p^\ast}.
    $$
    Therefore by majorization, the optimal constant is
    $$
        C_{n,p} = \max_{1\leq k \leq n-1} \frac{d_{n,p}(k)}{k(n-k)} = \max_{1\leq k \leq n-1} \frac{1}{n}\left(k^{1-p^\ast} + (n-k)^{1-p^{\ast}}\right)^{1/p^{\ast}}.
    $$
    Since $x \mapsto (n-x)^{1-p^\ast}+x^{1-p^\ast}$ is convex in $[1,n-1]$, the maximum occurs at the boundary, giving the final constant
    $$
        C_{n,p} = \frac{1}{n}\left(1 + {(n-1)^{1-p^\ast}}\right)^{1/p^\ast} = \frac{1}{n(n-1)}\left((n-1)^{p^\ast} + (n-1)\right)^{1/p^\ast}.
    $$
    For the case $p = 1$, the same proof gives $d_{n,1}(k) = \lVert 1_S - \frac{k}{n}\mathbf{1}\rVert_\infty=\frac{\max(n-k,k)}{n}$ and so
    $$
        C_{n,1} = \max_{1\leq k \leq n-1}\frac{d_{n,1}(k)}{k(n-k)} = \frac{1}{n}.
    $$
    Similar computations for the $p=\infty$ case gives $C_{n,\infty} = \frac{2}{n}$.
    
\end{proof}
\begin{proof}[Proof of Theorem \ref{thm:komlos-permutations-pq}]
Write $v_i = u_i + w_i$, where $u_i = P_{\mathbf{1}^\perp}v_i$ and $ w_i = P_{\mathbf{1}}v_i$. Every vector in $P_{\mathbf{1}}B_p^n$ has entries of absolute value at most $\tfrac{1}{n^{1/p}}$, so $w_i = t_i \mathbf{1}$ with $|t_i| \leq n^{-1/p}$.
Choosing the signs greedily as in the proof of Theorem~\ref{thm:komlos-permutations}, so that $\eps_k t_k  \sum_{i=1}^{k-1} \eps_i t_i \leq 0$, gives $\left|\sum_{i=1}^k \eps_i t_i\right|~\leq~n^{-1/p}$ for all $k$, and hence
$$
    \max_{1 \leq k \leq m} \left\lVert \sum_{i=1}^k \eps_i w_i \right\rVert_q \leq n^{1/q - 1/p}.
$$
The vectors $\eps_i u_i$ lie in $P_{\mathbf{1}^\perp}B_p^n$, so Lemma~\ref{lem: l_q - l_p} yields permutations $\sigma_i$ such that
$$
    \max_{1\leq k \leq m}\left\lVert \sum_{i=1}^k \sigma_i(\eps_i u_i) \right\rVert_q \leq R_{n,q} C_{n,p}.
$$
Since $\sigma_i(w_i) = w_i$, the triangle inequality gives, for each $1 \leq k \leq m$,
$$
    \left\lVert \sum_{i=1}^k \eps_i \sigma_i(v_i) \right\rVert_q \leq \left\lVert \sum_{i=1}^k \sigma_i(\eps_i u_i) \right\rVert_q + \left\lVert \sum_{i=1}^k \eps_i w_i \right\rVert_q \leq R_{n,q} C_{n,p} + n^{1/q - 1/p}.
$$
The asymptotic claims follow since $R_{n,\infty} = n-1$, so that $$R_{n,\infty}C_{n,p} = \frac{1}{n}\left((n-1)^{p^\ast} + (n-1)\right)^{1/p^\ast} \to 1$$ for $1 < p < \infty$, while $R_{n,\infty}C_{n,1} = \frac{n-1}{n}$ and $R_{n,\infty}C_{n,\infty} = \frac{2(n-1)}{n}$.
\end{proof}

\begin{remark}
    The method presented in this section may be applied to permutation balancing in any two norms. Namely, given $K,L \subset \mathbb{R}^n$ origin-symmetric and vectors $v_{1},\dots,v_m \in L$ there exist signs $(\varepsilon_{k})_{k=1}^m \in \{-1,1\}^m$ and permutations $(\sigma_k)_{k=1}^m \in S_n$ such that
    \[\max_{1\leq k \leq m}\left \| \sum_{i=1}^k \varepsilon_i \sigma_i(v_i)\right\|_K \le \|w\|_K \max_{x \in P_{\mathbf{1}^\perp} L} \|x\|_P + \frac{\|\mathbf 1\|_K}{\|\mathbf 1\|_L},\]
    with $w$ as defined in \eqref{def:permutahedron}.
\end{remark}
\subsection*{Acknowledgements}
JNW is supported by National Science Foundation grant DMS-2339829.

An LLM was used for exploration of the behavior of the problem of balancing with permutations, calculating examples that helped the authors to conclude that the permutahedron is the correct object of study. The LLM was later used for assistance with the proof of Lemma \ref{lem:permutahedron} and for editing this manuscript and checking for errors. The authors are fully responsible for the mathematical integrity of this manuscript and its correctness.

\bibliographystyle{amsplain}
\addcontentsline{toc}{section}{\refname}\bibliography{permutations}

\bigskip
\noindent\begin{minipage}{\textwidth}
\small
\textsc{Jonathan Niles-Weed}\\
Courant Institute of Mathematical Sciences and Center for Data Science,\\
New York University\\
\textit{Email:} \href{mailto:jnw@cims.nyu.edu}{\nolinkurl{jnw@cims.nyu.edu}}
\end{minipage}

\medskip
\noindent\begin{minipage}{\textwidth}
\small
\textsc{Shay Sadovsky}\\
Courant Institute of Mathematical Sciences, New York University\\
\textit{Email:} \href{mailto:ss20011@nyu.edu}{\nolinkurl{ss20011@nyu.edu}}
\end{minipage}

\medskip
\noindent\begin{minipage}{\textwidth}
\small
\textsc{Jacob Shkrob}\\
Courant Institute of Mathematical Sciences, New York University\\
\textit{Email:} \href{mailto:jas10184@nyu.edu}{\nolinkurl{jas10184@nyu.edu}}
\end{minipage}
\end{document}